\newtheorem{theorem}{Theorem}
\newtheorem{remark}[theorem]{Remark}
\begin{document}

\title{A renormalization approach to the Riemann zeta function at $-1$, $1+2+3+....\sim-1/12.$}

\author{%
  Gunduz Caginalp\affil{1,}\corrauth
}

\shortauthors{the Author(s)}

\address{%
  \addr{\affilnum{1}}{Mathematics Department, University of Pittsburgh, Pittsburgh, PA 15260, USA}
}

\corraddr{caginalp@pitt.edu; Tel: +1-412-624-8339
}

\begin{abstract} A scaling and renormalization approach to the Riemann zeta
function, $\zeta$, evaluated at $-1$ is presented in two ways. In the first,
one takes the difference between $U_{n}:=\sum_{q=1}^{n}q$ and
$4U_{\left\lfloor \frac{n}{2}\right\rfloor }$ where $\left\lfloor \frac{n}%
{2}\right\rfloor $ \ is the greatest integer function. Using the Cesaro mean
twice, i.e., $\left(  C,2\right)  $, yields convergence to the appropriate
value. For values of $z$ for which the zeta function is represented by a
\textit{convergent} infinite sum, the double Cesaro mean also yields
$\zeta\left(  z\right)  ,$ suggesting that this could be used as an
alternative method for extension from the convergent region of $z.$ In the
second approach, the difference $U_{n}-k^{2}\bar{U}_{n/k}$ between $U_{n}$ and
a particular average, $\bar{U}_{n/k}$, involving terms up to $k<n$ and scaled
by $k^{2}$ is shown to equal exactly $-\frac{1}{12}\left(  1-k^{2}\right)  $
for all $k<n$. This leads to another perspective for interpreting
$\zeta\left(  -1\right)  $.
\end{abstract}

\keywords{
\textbf{(Riemann zeta function, sum of natural numbers, 
$\zeta\left(-1\right)$, 1+2+3+..., Cesaro mean or sum)}
\newline
\textbf{Mathematics Subject Classification:} 11M99, 40C99}

\maketitle

\section{Introduction}

The Riemann zeta function is defined as the analytic continuation of the
infinite sum, $\zeta\left(  z\right)  =\sum_{q=1}^{\infty}q^{-z}$ where
$z=x+iy\in\mathbb{C}$ for $\operatorname{Re}z=x>1.$ For $x>1$ the series
converges absolutely to an analytic function. For all other values of $z$ it
diverges. Riemann showed \cite{t} that it can be continued analytically for
complex values of $z\in\mathbb{C}\ \backslash\left\{  1\right\}  ,$ i.e.,
except for the value corresponding to the harmonic series. For $z=-1$ one has
the (divergent) sum of natural numbers. The analytic continuation of the
series yields the result $\zeta\left(  -1\right)  =-1/12,$ with the formal
representation that appears to be an obvious contradiction:%
\begin{equation}
1+2+3+\ ...\ \sim\zeta\left(  -1\right)  =-\frac{1}{12}\ . \label{sum}%
\end{equation}
In this note we examine this relation using an approach that involves scaling
the truncated (finite) sum and renormalizing in order to obtain a finite
result as one takes the infinite limit of the sum.

Renormalization consists of a set of methodologies constituting a philosophy
and approach to problems exhibiting a divergence in some form. Originally
introduced for statistical mechanics and quantum field theory by Ken Wilson in
the 1970's, renormalization was able to yield the exponents with which key
physical properties diverge (see for example, \cite{w}, \cite{cfp}). The basic
idea is first to average spins within a particular geometric configuration,
thereby reducing the size of the system by a factor greater than unity. The
reduction in size must be compensated by adjusting the interaction strengths.
If this were not done, then iteration of this process would yield a trivial
fixed point of zero or infinity. With the appropriate renormalization,
however, one can iterate the procedure repeatedly. The key ansatz is that the
exponent of the divergent quantity should not change due to this averaging
process\ (with the interactions appropriately renormalized) since the
singularity is due to the divergence of the "correlation length" which is the
a measure of the distance at which spins can influence one another.

This approach to statistical mechanics revolutionized many calculations, as
very simple calculations yielded the results previously obtained by a
\textit{tour de force}, and led to its adaptation in a number of other areas.
The text by Creswick,\ Poole and Farach \cite{cfp} describes the
implementation of this approach to classical mathematical problems such as
fractals and random walk. For example, in random walk, the classical result
under robust conditions is that after $n$ steps the random walk has mean
distance $n^{1/2}$ from the original point. This result can also be obtained
by averaging sets of $k$ steps, and readjusting (i.e., renormalizing) the step
size so that one considers a walk of $n/k$ steps with the new step size. The
unique renormalization (i.e., setting the new step size) that leads to a
non-trivial result (i.e., not $0$ or $\infty$) yields the exponent $1/2$ in
$n^{1/2}.$

In this paper we describe methodology along the lines of this approach to
obtain an analog of $\left(  \ref{sum}\right)  $ that is well-defined.

The expression $\left(  \ref{sum}\right)  $ has been of interest in
applications such as string theory \cite{p}. In addition to this perspective,
two physicists \cite{pc} have also provided an explanation of $\left(
\ref{sum}\right)  $ based on shifting infinite sums.

\section{Averaging and re-scaling (Method 1)}
Using the notation $U_{n}=\sum_{q=1}^{n}q$ and $\left\lfloor r\right\rfloor $
as the greatest integer less than or equal to $r$ we define
\[
Y_{n}=U_{n}-4U_{\left\lfloor \frac{n}{2}\right\rfloor }%
\]
One has from a simple calculation,%
\[
Y_{n}=\left\{
\begin{array}
[c]{ccc}%
-n/2 & if & n\ even\\
\left(  n+1\right)  /2 & if & n\ odd
\end{array}
\right.  \ .
\]

Now, let $Z_{N}$ be the Cesaro mean of $\left\{  Y_{n}\right\}  $, which is
also known as the Cesaro sum and plays an important role in Fourier analysis
(see for example, \cite{s}, p.52), i.e.,%

\[
Z_{N}:=Avg\left\{  Y_{n}:n\leq N\right\}  =\frac{1}{N}\sum_{n=1}^{N}Y_{n}.
\]
Considering the odd and even terms separately, one can readily observe that
\begin{align}
Z_{2K+1}  &  =\frac{1}{2K+1}\left\{  1-1+2-2+...+\frac{2K+2}{2}\right\}
\nonumber\\
&  =\frac{1}{2}+\frac{1}{4K+2}\ .\nonumber\\
Z_{2K}  &  =0. \label{z}%
\end{align}
Let $X_{M}:=Avg\left\{  Z_{N}:N\leq M\right\}  $, i.e., the Cesaro mean of
$Z_{N},$ which is the Cesaro mean of the Cesaro mean, i.e., $\left(
C,2\right)  ,$ of the original $\left\{  Y_{n}\right\}  $.

\begin{theorem}
For $K\in\mathbb{N}$ one has the following:%

\begin{equation}
X_{2K}=\frac{1}{4}+\frac{1}{8K}\sum_{j=1}^{K}\frac{1}{j-1/2},\ \ \ \ X_{2K+1}%
=\frac{K+1}{4K+2}+\frac{1}{8K+4}\sum_{j=1}^{K+1}\frac{1}{j-1/2} \label{ident}%
\end{equation}%
\begin{equation}
\left\vert X_{2K}-\frac{1}{4}\right\vert \leq\frac{1}{3K}+\frac{\log\left(
K+1\right)  }{8K} \label{even}%
\end{equation}

\begin{equation}
\left\vert X_{2K+1}-\frac{1}{4}\right\vert \leq\frac{16/3+\log\left(
K+1\right)  }{8K+4} \label{odd}%
\end{equation}
and thus the limit
\begin{equation}
\lim_{M\rightarrow\infty}X_{M}=\frac{1}{4}, \label{lim}%
\end{equation}
which can also be expressed as%
\begin{equation}
\lim_{M\rightarrow\infty}Avg_{N\leq M}\left\{  Avg_{n\leq N}\left\{
\frac{U_{n}-4U_{\left\lfloor \frac{n}{2}\right\rfloor }}{1-4}\right\}
\right\}  =-\frac{1}{12}\ .
\end{equation}
\end{theorem}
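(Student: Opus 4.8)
The plan is to reduce everything to a direct computation of the two Cesàro means, using the explicit values of $Z_N$ recorded in $(\ref{z})$, and then to control the resulting harmonic-type tail by an integral comparison.

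First I would establish the identities $(\ref{ident})$. Since $Z_N=0$ whenever $N$ is even, only the odd indices contribute to $X_M=\frac{1}{M}\sum_{N=1}^{M}Z_N$. Writing the odd indices as $N=2m+1$ with $0\le m\le K-1$ and inserting $Z_{2m+1}=\frac12+\frac{1}{4m+2}$, I would obtain
\[
\sum_{N=1}^{2K}Z_N=\frac{K}{2}+\sum_{m=0}^{K-1}\frac{1}{4m+2}.
\]
The reindexing $j=m+1$ turns $\frac{1}{4m+2}=\frac{1}{2(2j-1)}=\frac14\cdot\frac{1}{j-1/2}$, so the second sum becomes $\frac14\sum_{j=1}^{K}\frac{1}{j-1/2}$; dividing by $2K$ gives the stated formula for $X_{2K}$. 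For $X_{2K+1}$ I would add the single extra term $Z_{2K+1}$ and absorb its piece $\frac{1}{4K+2}=\frac14\cdot\frac{1}{(K+1)-1/2}$ into the sum, promoting the upper limit from $K$ to $K+1$; dividing by $2K+1$ then yields the second identity.

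Next I would prove the bounds. For the even case, $(\ref{ident})$ gives $X_{2K}-\frac14=\frac{1}{8K}\sum_{j=1}^{K}\frac{1}{j-1/2}$, so it suffices to bound the sum. By convexity of $1/x$ (the midpoint rule), $\frac{1}{j-1/2}\le\int_{j-1}^{j}\frac{dx}{x}$ for $j\ge 2$, which telescopes to $\ln K$; adding the $j=1$ term, which equals $2$, gives $\sum_{j=1}^{K}\frac{1}{j-1/2}\le 2+\ln K\le\frac83+\ln(K+1)$, and this is exactly $(\ref{even})$ after dividing by $8K$. For the odd case I would first simplify $\frac{K+1}{4K+2}-\frac14=\frac{1}{8K+4}$, so that $X_{2K+1}-\frac14=\frac{1}{8K+4}\bigl(1+\sum_{j=1}^{K+1}\frac{1}{j-1/2}\bigr)$; the same integral bound with $K+1$ in place of $K$ gives $1+\sum_{j=1}^{K+1}\frac{1}{j-1/2}\le 3+\ln(K+1)\le\frac{16}{3}+\ln(K+1)$, which is $(\ref{odd})$. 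Since $\frac{\ln(K+1)}{K}\to 0$, both bounds force $X_{2K}\to\frac14$ and $X_{2K+1}\to\frac14$, so the full sequence converges and $(\ref{lim})$ holds. The renormalized statement then follows from the linearity of the two averaging operators together with the constant factor $\frac{1}{1-4}=-\frac13$, which sends the limit $\frac14$ to $-\frac{1}{12}$.

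The step I expect to be most delicate is getting the constants in the integral comparison right: one cannot integrate $1/x$ across $0$, so the $j=1$ term must be split off and estimated by hand as $2$, and one must verify that the conservative constants $\frac83$ and $\frac{16}{3}$ genuinely dominate $2$ and $3$ respectively after the harmless $\ln K$ versus $\ln(K+1)$ shift. Everything else is bookkeeping on the two Cesàro sums.
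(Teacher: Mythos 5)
Your proposal is correct and takes essentially the same route as the paper's proof: you compute $X_{2K}$ and $X_{2K+1}$ directly from the values $(\ref{z})$ to get the identities $(\ref{ident})$, then bound $\sum_{j}\frac{1}{j-1/2}$ by a constant plus $\log\left(K+1\right)$ via a convexity/integral comparison after splitting off the initial term(s), which is exactly the paper's estimate $\sum_{j=1}^{K+1}\frac{1}{j-1/2}=\frac{8}{3}+\sum_{j=3}^{K+1}\frac{1}{j-1/2}\leq\frac{8}{3}+\log\left(K+1\right)$. Your constants ($2+\ln K$ in the even case, $3+\ln\left(K+1\right)$ in the odd case) are in fact slightly sharper than the stated $\frac{8}{3}$ and $\frac{16}{3}$, so they dominate correctly and the limit $(\ref{lim})$, and hence the renormalized statement after dividing by $1-4=-3$, follows as you say.
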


\begin{proof}
A computation using $\left(  \ref{z}\right)  $ for even and
odd values of $M$ results in $\left(  \ref{even}\right)  $ and odd $\left(
\ref{odd}\right)  .$ The inequalities$\ $%
\[
\log\left(  K+1\right)  \leq\sum_{j=1}^{K+1}\frac{1}{j-1/2}=\frac{8}{3}%
+\sum_{j=3}^{K+1}\frac{1}{j-1/2}\leq\frac{8}{3}+\log\left(  K+1\right)
\]
yield the result $\left(  \ref{lim}\right)  $.
\end{proof}

\begin{remark}
One can summarize this heuristically as
\begin{align}
E\left[  U_{n}-4U_{\lfloor\frac{n}{2}\rfloor}\right]   &  =P\left\{
n=odd\right\}  \left(  \frac{n+1}{2}\right)  +P\left\{  n=even\right\}
\left(  -\frac{n}{2}\right) \nonumber\\
&  =\frac{1}{2}\left(  \frac{n+1}{2}-\frac{n}{2}\right)  =\frac{1}{4}.
\label{k=2}%
\end{align}
\end{remark}

In order to make $\left(  \ref{k=2}\right)  $ precise, one would need to
invoke some basic probabilistic ideas, primarily the Kolmogorov extension, or
existence, theorem (see for example, \cite{b}, p. 514) whereby the probability
on a finite subset of $\mathbb{N}$ can be extended to all of $\mathbb{N}$.
Formally identifying $U_{n}$ and $U_{\lfloor\frac{n}{2}\rfloor}$ as
$n\rightarrow\infty$ as though they were convergent leads to $\left(
\ref{sum}\right)  $.

\bigskip

One can define the analogous relations $U_{n}\left(  z\right)  :=\sum
_{q=1}^{n}q^{-z}$ which, as noted above, converges to $\zeta\left(  z\right)
$ for $\operatorname{Re}z>1$. Note that if a series that converges in the
ordinary sense, the Cesaro mean must converge to the same limit. Moreover, if
a sequence $\left\{  c_{j}\right\}  $ is convergent to $c$, the Cesaro mean
also converges to $c$. Thus, for values of $z\in\mathbb{C}$ in the convergent
region, the infinite sum is equal to the Cesaro mean, so that the Cesaro mean
can be used for both convergent and nonconvergent values.

\bigskip

In particular for a value $z$ for which $U_{n}\left(  z\right)  $ converges in
the usual sense to $\zeta\left(  z\right)  $, one has%
\[
\lim_{n\rightarrow\infty}Y_{n}\left(  z\right)  =\lim_{n\rightarrow\infty
}\left\{  U_{n}\left(  z\right)  -4U_{\left\lfloor \frac{n}{2}\right\rfloor
}\left(  z\right)  \right\}  =-3\zeta\left(  z\right)
\]
as $n\rightarrow\infty$ . Since $Y_{n}\left(  z\right)  $ is convergent for
this value of $z,$ it follows that the Cesaro mean $Z_{N}\left(  z\right)
:=N^{-1}\sum_{n=1}^{N}Y_{n}\left(  x\right)  $ also converges to the same
limit, $-3\zeta\left(  z\right)  $. Similarly, the Cesaro mean, $X_{M}\left(
z\right)  :=M^{-1}\sum_{N=1}^{M}Z_{N}\left(  z\right)  $ also converges to
$-3\zeta\left(  z\right)  $. Thus, the interpretation that double Cesaro mean
of
\[
\frac{U_{n}\left(  z\right)  -4U_{\left\lfloor \frac{n}{2}\right\rfloor
}\left(  z\right)  }{1-4}%
\]
converges to $\zeta\left(  z\right)  $ is maintained for values of $z$ for
which $U_{n}\left(  z\right)  =\sum_{q=1}^{n}q^{-z}$ is convergent.

For example, setting $z=2,$ so that one has a convergent series,%
\[
\zeta\left(  2\right)  =\sum_{n=1}^{\infty}n^{-2}=\frac{\pi^{2}}%
{6},\ i.e.,\ \ \lim_{n\rightarrow\infty}Y_{n}\left(  2\right)  =\left(
-3\right)  \frac{\pi^{2}}{6},
\]
and consequently $\lim_{M\rightarrow\infty}X_{M}\left(  z\right)  /\left(
-3\right)  =$ $\pi^{2}/6.$

Hence, this approach using the double Cesaro mean may present another avenue
to extend the sum from the convergent to the nonconvergent regions, and offer
other ways to study the Riemann zeta function.

\section{Averaging and re-scaling (Method 2)}

An alternate approach to averaging (without using the greatest integer
concept) can be implemented by fixing $k$ and letting $n=mk+j.$

Note that for $r\in\mathbb{N}$ one has $U_{r}=\sum_{q=1}^{r}q=\frac{1}%
{2}r\left(  r+1\right)  .$ Define a continuous extension of $U_{r}$ by
$U_{r}:=\frac{1}{2}r\left(  r+1\right)  $ to $r\in\mathbb{R}$. \ Then define
the average of over the values of $j\in\left\{  0,...,k-1\right\}  $ as%
\begin{equation}
\bar{U}_{\frac{n}{k}}:=\frac{1}{k}\sum_{j=0}^{k-1}U_{\frac{n-j}{k}}.
\label{avg}%
\end{equation}

\begin{theorem}
For $n,k\in\mathbb{N}$ and $k<n$ one has the exact
relation $\ $%
\begin{equation}
\frac{U_{n}-k^{2}\bar{U}_{\frac{n}{k}}}{1-k^{2}}=-\frac{1}{12}. \label{k}%
\end{equation}
\end{theorem}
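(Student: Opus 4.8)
The plan is to verify the identity $(\ref{k})$ by direct computation, exploiting the fact that the continuous extension $U_r = \tfrac{1}{2}r(r+1)$ is an explicit quadratic polynomial in $r$. First I would substitute the definition $(\ref{avg})$ into the numerator $U_n - k^2 \bar U_{n/k}$, writing
\[
U_n - k^2 \bar U_{\frac{n}{k}} = U_n - k^2 \cdot \frac{1}{k}\sum_{j=0}^{k-1} U_{\frac{n-j}{k}} = \frac{1}{2}n(n+1) - \frac{k}{2}\sum_{j=0}^{k-1}\frac{n-j}{k}\left(\frac{n-j}{k}+1\right).
\]
The main idea is that each summand $U_{(n-j)/k}$ expands as $\tfrac{1}{2}\bigl((n-j)^2/k^2 + (n-j)/k\bigr)$, so the sum over $j$ reduces to evaluating $\sum_{j=0}^{k-1}(n-j)^2$ and $\sum_{j=0}^{k-1}(n-j)$ via the standard closed forms for $\sum j$ and $\sum j^2$.

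Next I would carry out these two elementary summations. Setting $j' = n-j$ so that $j'$ ranges over $\{n-k+1,\dots,n\}$, the linear piece $\sum (n-j) = kn - \tfrac{1}{2}k(k-1)$ and the quadratic piece $\sum (n-j)^2 = \sum_{j'=n-k+1}^{n} (j')^2$ both have explicit polynomial expressions in $n$ and $k$. Substituting these back, I expect the terms involving $n^2$ and $n$ to cancel against the corresponding terms from $U_n = \tfrac{1}{2}(n^2+n)$ once the factor $k^2/k = k$ and the $1/k^2$ from the expansion are accounted for; the anticipated outcome is that the entire $n$-dependence disappears, leaving a quantity depending only on $k$.

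The step I expect to be the genuine (though purely algebraic) crux is confirming that all $n$-dependent terms cancel exactly, since that cancellation is what makes the right-hand side of $(\ref{k})$ a constant independent of $n$ — a surprising feature given that $\bar U_{n/k}$ involves $n$ nontrivially. Once the numerator simplifies to a multiple of $(1-k^2)$, the identity follows immediately upon dividing. Concretely, I anticipate the numerator collapsing to $\tfrac{1}{12}(1-k^2)\cdot(\text{something})$; matching against the claimed value $-\tfrac{1}{12}$ then pins down the remaining constant and completes the proof. Because everything reduces to polynomial identities in $n$ and $k$, no analytic estimates or limiting arguments are required, and the only obstacle is bookkeeping accuracy in collecting the coefficients of $n^2$, $n$, and $n^0$ after the summation.
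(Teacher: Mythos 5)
Your proposal is correct and follows essentially the same route as the paper: a direct substitution of the definition $(\ref{avg})$ into the numerator, polynomial expansion of each $U_{(n-j)/k}$, and verification that all $n$-dependent terms cancel, leaving exactly $U_n - k^2\bar{U}_{n/k} = -\tfrac{1}{12}(1-k^2)$. The paper's proof is simply a more terse version of this same computation, so the only difference is that you spell out the intermediate summations $\sum_{j=0}^{k-1}(n-j)$ and $\sum_{j=0}^{k-1}(n-j)^2$ explicitly.
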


\begin{proof}
From the definition $\left(  \ref{avg}\right)  $ one observes
\begin{align*}
U_{n}-k^{2}\bar{U}_{\frac{n}{k}}  &  =\frac{1}{2}n\left(  n+1\right)
-k^{2}\frac{1}{k}\sum_{j=0}^{k-1}\frac{1}{2}\left(  \frac{n-j}{k}\right)
\left(  \frac{n-j}{k}+1\right) \\
&  =-\frac{1}{12}\left(  1-k^{2}\right)  .
\end{align*}

Thus, the quotient in $\left(  \ref{k}\right)  $ is thus independent of both
$k$ and $n.$ This averaging and scaling of $U_{n}$ by any factor $k$ together
with renormalization by $k^{2}$ yields the unique number $-1/12.$
\end{proof}

Formally, the identification of $U_{n}$ and $\bar{U}_{n/k}$ with $U$ in the
limit as $n\rightarrow\infty$ leads to $\sum_{q=1}^{\infty}q\sim
\zeta\left(  -1\right)  =-1/12.$

\bigskip

\begin{remark}
(a) The number $k$ can be regarded as the analog of the
"subwalk" of $k$ steps\ in the random walk problem discussed in the
introduction, with $n/k$ the new walk with the mean step size increased by a
factor $k^{2}.$

\bigskip

(b) Setting $k:=n^{p}$ with $p\in\left(  0,1\right)  $ one can write expression
$\left(  \ref{k}\right)  $ in the form%
\[
\frac{n^{-p}U_{n}-n^{p}\bar{U}_{n^{1-p}}}{n^{-p}-n^{p}}=-\frac{1}{12}.
\]

\bigskip

(c) Applying this approach for $k=2$ yields,%

\begin{align*}
\bar{U}_{\frac{n}{2}}  &  =\frac{1}{2}\left(  \frac{\frac{n}{2}\left(
\frac{n}{2}+1\right)  }{2}+\frac{\frac{n-1}{2}\left(  \frac{n-1}{2}+1\right)
}{2}\right)  =\allowbreak\frac{1}{8}n+\frac{1}{8}n^{2}-\frac{1}{16}\\
U_{n}-4\bar{U}_{\frac{n}{2}}  &  =\allowbreak\frac{1}{4}%
\end{align*}
so that formally identifying $U_{n}$ and $\bar{U}_{\frac{n}{2}}$ as $U$ in the
limit $n\rightarrow\infty$ yields $U\sim-\frac{1}{12}.$

\bigskip

(d) To extend this result to other values of $z\in\mathbb{C}$,\ one can utilize
again the analogous quantity, $U_{n}^{\left(  z\right)  }=\sum_{q=1}^{n}%
q^{-z}$, determine whether it is possible to formulate a definition analogous
to $\left(  \ref{avg}\right)  ,$ and consider values of $z $ for which these
converges. The left hand side of $\left(  \ref{k}\right)  $ can be considered
in the same manner as described for Method 1.

\end{remark}

\section*{Acknowledgments}
The author thanks Dr. Alban Deniz and Prof. Bogdan
Ion for useful discussions.

\section*{Conflict of Interest} The author declares no conflict of interest.

\end{document}